 \keywords{ primes, gaps, prime constellations, Eratosthenes sieve, rough numbers}
\subjclass{11N05, 11A41, 11A07}
\newtheorem{theorem}{Theorem}[section]
\newtheorem{lemma}[theorem]{Lemma}
\newdimen\epsfxsize
\newdimen\epsfysize
\newcommand {\gap}     {\makebox[0.075 in]{}}   
\newcommand {\biggap}     {\makebox[0.2 in]{}}   
\newcommand{\lil}   {\scriptstyle }
\newcommand {\pml}[1]  {{#1}^{\#}}
\newcommand{\pgap}   {{\mathcal G}}
\begin{document}

\title{On the counts $\Phi(x,p)$ of $p$-rough numbers}

\date{12 Feb 2024; first draft 14 Aug 2023}

\author{Fred B. Holt}
\address{fbholt62@gmail.com; https://www.primegaps.info}

\maketitle

\begin{abstract}
The $p$-rough numbers are those numbers {\em all} of whose prime factors are greater than $p$.
These are the numbers that remain as candidate primes after Eratosthenes sieve has been advanced
up through the prime $p$.

We denote by $\Phi(x,p)$ the count of $p$-rough numbers up to and including $x$.  The asymptotic behavior
of $\Phi(x,p)$ has been described by Tenenbaum, Buchstab, et al.

From our studies of Eratosthenes sieve as a discrete dynamic system, we identify symmetries for $\Phi(x,p)$ for
fixed $p$.  These symmetries
are most easily seen in the derived function 
$${\Delta \Phi(x,p) = \Phi(x,p) - \frac{\phi(\pml{p})}{\pml{p}} x},$$
which measures the difference between $\Phi(x,p)$ and the line with slope $\frac{\phi(\pml{p})}{\pml{p}}$.
For fixed $p$, the function $\Delta \Phi(x,p)$ has a translational symmetry of period $\pml{p}$, 
$$ \Delta \Phi(x+\pml{p},p) \; = \; \Delta \Phi(x,p) \biggap {\rm with} \gap \Phi(k \cdot \pml{p},p)=0,$$
and a rotational symmetry around the midpoints $\tilde{x}_k = (k-\frac{1}{2})\pml{p}$
$$ \Delta \Phi(\tilde{x}_k+x, p) \; = \; - \Delta \Phi(\tilde{x}_k-x,p) \biggap {\rm for} \; 0 \le x \le \frac{\pml{p}}{2}.$$

Previous work on $p$-rough numbers that estimates the error in $\Phi(x,p)$ relative to the line $y=\frac{x}{\ln p}$
misses the line of symmetry.  These estimates take the lines of symmetry to their limit while leaving the residual counts
behind.  These other estimates primarily measure the growing drift between the surrogate line
$y=\frac{x}{\ln p}$ and the true line of symmetry $y=\frac{\phi(\pml{p})}{\pml{p}} x$.  In this sense, they are 
estimates of the error in Merten's Third Theorem.
\end{abstract}

\section{Setting}

$\Phi(x,p)$ is the number of $p$-rough numbers $\gamma \le x$.  A number $\gamma$ is $p$-rough iff all of the 
prime factors of $\gamma$ are greater than $p$.  We include $1$ as a $p$-rough number.  The $p$-rough numbers
are exactly the unit $1$ and the candidate primes that are left after Eratosthenes sieve has been advanced
up through the prime $p$.

Tenenbaum has shown that
$$ \Phi(x,p) \; = \; \frac{x}{\ln p} \left( \omega(u) + O(\frac{1}{\ln p}) \right) $$
where $\omega(u)$ is the Buchstab function \cite{ChGold, FrGrOsc, FanPom}. 
One central contribution of this paper is to establish $y=\frac{\phi(\pml{p})}{\pml{p}}x$ as a line of symmetry
for $\Phi(x,p)$, so that estimates against other lines, such as $y=\frac{x}{\ln p}$, will eventually be primarily measuring the
drift between the line of symmetry and any surrogate line. 

Using the line of symmetry for the analysis of $\Phi(x,p)$, we can introduce the function
$$ \Delta \Phi(x,p) \; = \; \Phi(x,p) - \frac{\phi(\pml{p})}{\pml{p}} x$$
which nicely decouples the linear growth of $\Phi(x,p)$ from its deviations around its line of symmetry.

\vspace{0.125in}

Let $p$ be a prime and $q$ the next larger prime.
There is a cycle of gaps $\pgap(\pml{p})$ among the candidate primes remaining after Eratosthenes sieve has been 
advanced through the prime $p$.
The cycle $\pgap(\pml{p})$ has length $\phi(\pml{p})$ and span $\pml{p}$.
The cycle is symmetric, $g_k = g_{\phi(\pml{p})-k}$, with 
$$g_1 = g_{\phi(\pml{p})-1} = q-1 \biggap {\rm and} \biggap g_{\phi(\pml{p})}=2.$$
These cycles of gaps $\pgap(\pml{p})$ are studied in \cite{FBHSFU, FBHPatterns}.

For example, the cycle $\pgap(\pml{5})$ has length (number of gaps) $\phi(\pml{5})=8$ and span
(sum of gaps) $\pml{5}=30$.
$$ \pgap(\pml{5}) \biggap = \biggap 6 \; 4 \; 2 \; 4 \; 2 \; 4 \; 6 \; 2.$$
Starting with $1$, these gaps separate the $5$-rough numbers
$$ 1, \; 7, \; 11, \; 13, \; 17, \; 19, \; 23, \; 29, \; 31, \; 37, \; 41, \; 43, \; 47, \; 49, \; 53, \ldots $$

The gaps in the cycle $\pgap(\pml{p})$ separate the $p$-rough numbers.
So features of the cycle $\pgap(\pml{p})$ govern the distribution of the $p$-rough numbers and are thereby
reflected in $\Phi(x,p)$ for fixed $p$.  Specifically,  since $\pgap(\pml{p})$ is a cycle of
length $\phi(\pml{p})$ and span $\pml{p}$, there is a {\em translational symmetry}
$$ \Phi(x + \pml{p}, p)  = \Phi(x,p) + \phi(\pml{p}).$$
That is, for any $x > 0$, between $x$ and $x+\pml{p}$ there is one complete cycle of $\pgap(\pml{p})$, and
thus $\phi(p)$ additional $p$-rough numbers in this interval.

\begin{figure}[htb]
\centering
\includegraphics[width=5in]{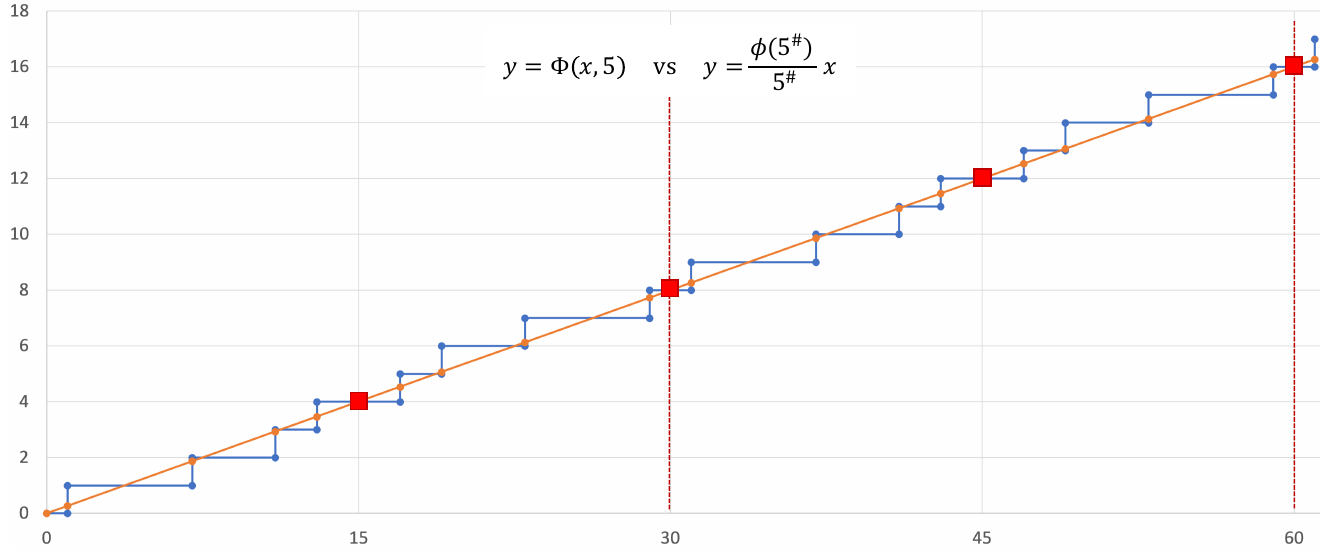}
\caption{\label{5roughPhiFig}  ${y=\Phi(x,5)}$ and ${y = \frac{\phi(\pml{5})}{\pml{5}} x }$
are shown over two periods. The cycle $\pgap(\pml{5})$ has length $8$ and span $30$.
We mark the waypoints at the ends of the periods and at the midpoints, and the periods of span $30$ are delimited, to
highlight the translational and rotational symmetries of $\Phi(x,5)$.}
\end{figure}

{\em Waypoints at ends of periods.}  From the translational symmetry and the initial condition $\Phi(0,p)=0$, we have the set
of waypoints $x_k = k \cdot \pml{p}$ for which we have the set values
$$ \Phi(x_k,p) \; = \; \Phi(k \cdot \pml{p}, p) \; = \; k \cdot \phi(\pml{p}) \biggap {\rm for~all} \gap k \ge 1.$$
These waypoints lie along the line
$$ y \; = \; \frac{\phi(\pml{p})}{\pml{p}} x.$$
That is, the graph of $y = \Phi(x,p)$ crosses this line $ y = \frac{\phi(\pml{p})}{\pml{p}} x$ regularly at these waypoints.

Moreover by the translational symmetry of $\Phi(x,p)$, the behavior of $\Phi(x,p)$ is completely determined by its behavior
in the first period of the cycle $\pgap(\pml{p})$, between $x_0 = 0$ and $x_1 = \pml{p}$.  The deviations of $\Phi(x,p)$
from the line $ y = \frac{\phi(\pml{p})}{\pml{p}} x$ in the first period of $\pgap(\pml{p})$ are repeated over every subsequent period
of the cycle.
In Figure~\ref{5roughPhiFig} we see an example for $p=5$ of the waypoints and the translational symmetry of $\Phi(x,5)$ along
the line $y=\frac{4}{15} x$.

\vspace{0.05in}

{\em Rotational symmetry of $\Phi(x,p)$.}  The cycle $\pgap(\pml{p})$ also has a reflective symmetry that corresponds to 
a rotational symmetry in the graph of ${y = \Phi(x,p)}$.  In the cycle $\pgap(\pml{p})$ we have the symmetry
$$ g_{\phi(\pml{p})-j} \; = \; g_j $$
with $g_1 = g_{\phi(\pml{p})-1} = q-1$, one less than the next prime $q$, and $g_{\phi(\pml{p})} = 2$.

From this symmetry in the cycle of gaps $\pgap(\pml{p})$ we get the corresponding symmetry in $\Phi(x,p)$ of
\begin{eqnarray*}
 \Phi(x_1 - x, p) & = & \phi(\pml{p}) - \Phi(x,p) \\
{\rm and} \biggap \Phi(x_k - x, p) & = & k \cdot \phi(\pml{p}) - \Phi(x,p) \biggap {\rm for} \gap 0 \le x \le \frac{\pml{p}}{2}
\end{eqnarray*}

This shows up in the graph of $\Phi(x,p)$ as a rotational symmetry around an additional set of waypoints, the midpoint waypoints
$$ \tilde{x}_k = \left( k -\frac{1}{2} \right) \pml{p}$$
which also lie along the line $y = \frac{\phi(\pml{p})}{\pml{p}} x$.
$$ \Phi(\tilde{x}_k, p) \; = \; \left(k - \frac{1}{2}\right) \phi(\pml{p}).$$


We observe examples of the midpoint waypoints and the rotational symmetry in the graph of $\Phi(x,5)$ in Figure~\ref{5roughPhiFig}.
We also note that at the vertical rises in $\Phi(x,p)$, we need to carefully consider both the lower endpoint and upper endpoint
for the rotational symmetry to hold.  That is, we would technically write
$$ \Phi(x_k - x, p) \; = \; k \cdot \phi(\pml{p}) - \lim_{\epsilon \rightarrow 0} \Phi(x-\epsilon,p) \biggap 
{\rm for} \gap 0 \le x \le \frac{\pml{p}}{2}. $$

If $x$ is a $p$-rough number, there is a discontinuity in $\Phi(x,p)$ at $x$, and for the rotational symmetry we need to approach
the lower value of $\Phi(x^-,p)$ as we approach from the left in order to get the value $\Phi(x_k-x, p)$ where the image of
the discontinuity at $x$ is flipped upside down.

Under the rotational symmetry we see that the midpoint waypoints also lie along that line $y = \frac{\phi(\pml{p})}{\pml{p}}x$,
and that the deviations of $\Phi(x,p)$ from this line are rotational and translational images of the deviations in the first half of the first period of
the cycle $\pgap(\pml{p})$.

\section{The function $\Delta \Phi(x,p) = \Phi(x,p) - \frac{\phi(\pml{p})}{\pml{p}} \cdot x$.}
There is a certain simplicity in working with deviations of $\Phi(x,p)$ from the line of symmetry
${y = \frac{\phi(\pml{p})}{\pml{p}} x}$.  The function
\begin{eqnarray*}
 \Delta \Phi(x,p) & = & \Phi(x,p) - \frac{\phi(\pml{p})}{\pml{p}} \cdot x \\
  & = & \Phi(x,p) - \frac{1}{\mu} \cdot x
\end{eqnarray*}
measures these deviations of $\Phi(x,p)$ from the line of symmetry through the waypoints.

In this definition of $\Delta \Phi(x,p)$ we note that the slope of the line of symmetry is the reciprocal of the average gap
size $\mu$ in the cycle $\pgap(\pml{p})$.  There are $\phi(\pml{p})$ gaps in $\pgap(\pml{p})$ of total sum $\pml{p}$, so the
average gap size ${\mu = \mu(p) = \frac{\pml{p}}{\phi(\pml{p})}}$.
The line ${y=\frac{1}{\mu}\cdot x}$ measures $x$ in terms of the average gap size in $\pgap(\pml{p})$. 

Using the line of symmetry $y=\frac{1}{\mu} x$, the function $\Delta \Phi(x,p)$ is bounded and periodic, with rotational
symmetries.  Any analysis that uses a different line, such as $y=\frac{x}{\ln p}$ breaks this symmetry and the bounded deviations, simply
by using the wrong line, and these analyses will ultimately be measuring the drift between the surrogate line and the line of symmetry.

\begin{figure}[htb]
\centering
\includegraphics[width=5in]{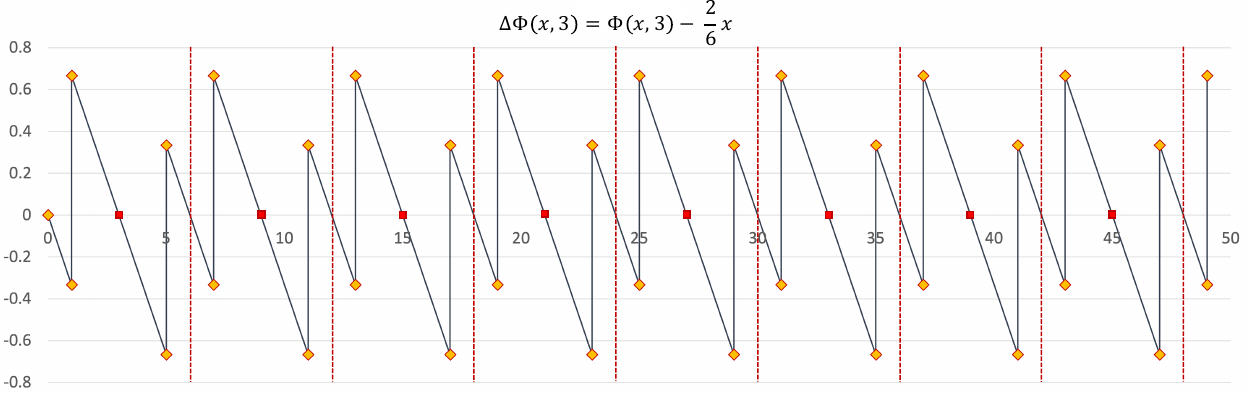}
\caption{\label{3roughFig} The equation ${\Delta \Phi(x,p) = \Phi(x,p) - \frac{1}{\mu} x }$
for ${p=3}$.  The cycle $\pgap(\pml{3})$ has length $2$ and span $6$.  The waypoints are highlighted and the 
periods of span $6$ are delimited, to highlight the symmetries.  Note the translational symmetry across periods
of the cycle, and the rotational symmetry around the waypoints $x_k$ and $\tilde{x}_k$.}
\end{figure}

The waypoints $x_k$ and $\tilde{x}_k$ all lie along this line, and the translational and rotational symmetries
of $\Phi(x,p)$ keep the values close to this line.  
So we turn our attention to the function 
$$\Delta \Phi(x,p) = \Phi(x,p) - \frac{1}{\mu} x.$$

The function $\Delta \Phi(x,p)$ has a sawtooth graph, with vertical rises of $1$ at each new $p$-rough number,
followed by parallel lines of decay of slope ${\frac{-\phi(\pml{p})}{\pml{p}} = \frac{-1}{\mu}}$.  From $\Phi(x,p)$,
the function $\Delta \Phi(x,p)$ inherits the following properties.

\begin{lemma}
Waypoints at ends of cycles:  There are waypoints at the end of each cycle ${x_k = k \cdot \pml{p}}$, 
for which
$$\Delta \Phi(x_k,p) = 0.$$
\end{lemma}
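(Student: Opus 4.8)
The plan is to obtain this immediately from the translational symmetry of $\Phi(x,p)$ derived above, so the proof is essentially a substitution. First I would invoke the fact that the cycle of gaps $\pgap(\pml{p})$ has span $\pml{p}$: traversing any interval of length $\pml{p}$ passes through exactly one complete cycle of gaps, hence picks up exactly $\phi(\pml{p})$ new $p$-rough numbers. This is precisely the identity $\Phi(x+\pml{p},p)=\Phi(x,p)+\phi(\pml{p})$ recorded in the Setting section.

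Next I would iterate this identity from the initial condition. Since the smallest $p$-rough number is $1>0$, we have $\Phi(0,p)=0$, and applying the translational identity $k$ times yields $\Phi(x_k,p)=\Phi(k\cdot\pml{p},p)=k\cdot\phi(\pml{p})$ for every integer $k\ge 1$ (and trivially for $k=0$). Substituting into the definition of $\Delta\Phi$ then gives
$$\Delta\Phi(x_k,p)=\Phi(k\cdot\pml{p},p)-\frac{\phi(\pml{p})}{\pml{p}}\cdot k\cdot\pml{p}=k\cdot\phi(\pml{p})-k\cdot\phi(\pml{p})=0,$$
which is the assertion of the lemma.

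The only point requiring a word of care — more a remark than a genuine obstacle — is the behavior of $\Delta\Phi$ at jump discontinuities. A discontinuity of $\Phi(x,p)$ occurs only at a $p$-rough number, whereas $x_k=k\cdot\pml{p}$ is divisible by every prime up through $p$ and so is never $p$-rough. Hence $\Phi$, and therefore $\Delta\Phi$, is continuous at each $x_k$, and the value $\Delta\Phi(x_k,p)=0$ is unambiguous; no left-hand/right-hand limit distinction is needed here. I would close by noting that this identity says exactly that the waypoints $x_k$ all lie on the line $y=\frac{\phi(\pml{p})}{\pml{p}}x=\frac{1}{\mu}x$, since $\Delta\Phi(x_k,p)=0$ is equivalent to $\Phi(x_k,p)=\frac{1}{\mu}x_k$.
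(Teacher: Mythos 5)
Your proposal is correct and follows the same route as the paper: iterate the translational identity $\Phi(x+\pml{p},p)=\Phi(x,p)+\phi(\pml{p})$ from $\Phi(0,p)=0$ to get $\Phi(x_k,p)=k\cdot\phi(\pml{p})$, then substitute into the definition of $\Delta\Phi$. Your added remark that $x_k$ is divisible by every prime up to $p$, hence not $p$-rough, so no left/right limit issue arises at the waypoints, is a nice touch consistent with the paper's discussion of the vertical rises.
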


\begin{lemma} Midpoint waypoints:  There are waypoints at the middle of each cycle, ${\tilde{x}_k = (k-\frac{1}{2}) \pml{p}}$, 
for which
$$\Delta \Phi(\tilde{x}_k,p)=0.$$
\end{lemma}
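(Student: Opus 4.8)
The plan is to reduce the lemma to the value identity $\Phi(\tilde{x}_k,p) = (k-\tfrac12)\phi(\pml{p})$, which was recorded above as a consequence of the rotational symmetry of $\Phi(x,p)$, and then unwind the definition of $\Delta\Phi$. Indeed, since $\tilde{x}_k = (k-\tfrac12)\pml{p}$,
\[
\Delta\Phi(\tilde{x}_k,p) \;=\; \Phi(\tilde{x}_k,p) - \frac{\phi(\pml{p})}{\pml{p}}\,\tilde{x}_k \;=\; (k-\tfrac12)\,\phi(\pml{p}) - \frac{\phi(\pml{p})}{\pml{p}}\,(k-\tfrac12)\,\pml{p} \;=\; 0,
\]
so the whole content of the lemma is the claim $\Phi(\tilde{x}_k,p) = (k-\tfrac12)\phi(\pml{p})$.

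To prove that claim I would first apply the translational symmetry $\Phi(x+\pml{p},p) = \Phi(x,p) + \phi(\pml{p})$, $k-1$ times, to reduce to the case $k=1$; it then suffices to show $\Phi(\pml{p}/2,p) = \tfrac12\phi(\pml{p})$. For this I would use the rotational symmetry about the waypoint $x_1 = \pml{p}$, namely $\Phi(\pml{p}-x,p) = \phi(\pml{p}) - \Phi(x^-,p)$, evaluated at the single point $x = \pml{p}/2$. Since $\pml{p}/2$ is the fixed point of the reflection $x\mapsto \pml{p}-x$, this collapses to $2\,\Phi(\pml{p}/2,p) = \phi(\pml{p})$ as long as $\Phi$ is continuous at $\pml{p}/2$. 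It is: a $p$-rough number is precisely a positive integer coprime to $\pml{p}$, whereas $\pml{p}/2 = 3\cdot 5 \cdots p$ shares the factor $3$ with $\pml{p}$ for every prime $p \ge 3$, so $\pml{p}/2$ is not $p$-rough and $\Phi$ has no jump there. Hence $\Phi(\pml{p}/2,p) = \tfrac12\phi(\pml{p})$, and with the translation step this gives $\Phi(\tilde{x}_k,p) = (k-\tfrac12)\phi(\pml{p})$, whence $\Delta\Phi(\tilde{x}_k,p) = 0$.

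The only delicate point --- the step I expect to be the main obstacle, modest though it is --- is the endpoint bookkeeping at the midpoint: one must verify that nothing sits on the axis of reflection, i.e.\ that $\pml{p}/2$ is not itself a $p$-rough number, so that the involution $n\mapsto \pml{p}-n$ (which preserves coprimality to $\pml{p}$ --- this is the arithmetic source of the reflective symmetry $g_j = g_{\phi(\pml{p})-j}$ of the cycle $\pgap(\pml{p})$) matches the $p$-rough numbers below $\pml{p}/2$ bijectively with those above. This check fails only in the degenerate case $p=2$, where $\pml{2}/2 = 1$ is $2$-rough and the midpoint value must be read through the discontinuity convention discussed above; for every prime $p \ge 3$ the argument runs verbatim.
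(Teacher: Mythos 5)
Your proof is correct and follows essentially the same route as the paper: the translational symmetry reduces everything to one period, and the reflective symmetry of the cycle $\pgap(\pml{p})$ (equivalently, the coprimality-preserving involution $n \mapsto \pml{p}-n$) forces $\Phi(\pml{p}/2,p) = \tfrac12\phi(\pml{p})$, hence $\Delta\Phi(\tilde{x}_k,p)=0$. Your explicit check that $\pml{p}/2$ is not itself $p$-rough (so no jump sits on the axis of reflection), and your observation that the statement degenerates at $p=2$, are points the paper passes over silently, and they are worth having.
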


\begin{lemma} Symmetries:  $\Delta \Phi$ has a translational symmetry
$$\Delta \Phi(x+\pml{p},p) = \Delta \Phi(x,p) \gap {\rm for~all} \gap x \ge 0, $$
and rotational symmetries around its waypoints:
\begin{eqnarray*}
 \Delta \Phi(x_k-x,p) &  = & - \Delta \Phi(x_k+x,p) \gap {\rm for~all} \gap x_k-x, \; x_k+x \ge 0 \\
 \Delta \Phi(\tilde{x}_k-x,p) &  = & - \Delta \Phi(\tilde{x}_k+x,p) \gap {\rm for~all} \gap \tilde{x}_k-x, \; \tilde{x}_k+x \ge 0 
\end{eqnarray*}
\end{lemma}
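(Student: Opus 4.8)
The plan is to derive all three identities for $\Delta \Phi(x,p)$ from the symmetries of $\Phi(x,p)$ already established in Section~1, the only mechanism being that the discrete increment $\phi(\pml{p})$ appearing in each symmetry of $\Phi$ is matched exactly by $\frac1\mu$ times the corresponding span, because $\frac1\mu=\frac{\phi(\pml{p})}{\pml{p}}$. Concretely I will use
$$\frac{\pml{p}}{\mu}=\phi(\pml{p}),\qquad \frac{x_k}{\mu}=k\,\phi(\pml{p}),\qquad \frac{\tilde{x}_k}{\mu}=\Bigl(k-\tfrac12\Bigr)\phi(\pml{p}).$$
For the translational symmetry I would simply substitute the definition of $\Delta\Phi$ and the translational symmetry of $\Phi$:
$$\Delta \Phi(x+\pml{p},p)=\Phi(x+\pml{p},p)-\tfrac1\mu(x+\pml{p})=\bigl(\Phi(x,p)+\phi(\pml{p})\bigr)-\tfrac1\mu x-\tfrac{\pml{p}}{\mu},$$
and then cancel $\tfrac{\pml{p}}{\mu}=\phi(\pml{p})$ against the $\phi(\pml{p})$, leaving $\Phi(x,p)-\tfrac1\mu x=\Delta\Phi(x,p)$. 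This holds for every $x\ge 0$, and iterating from $\Delta\Phi(0,p)=0$ recovers $\Delta\Phi(x_k,p)=0$ as well.

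For the rotational symmetries I would first reduce to a single reflection plus periodicity. From the half-period reflection of Section~1 with $k=1$, namely $\Phi(x_1-x,p)=\phi(\pml{p})-\Phi(x^{-},p)$ for $0\le x\le\frac{\pml{p}}{2}$, together with the translational symmetry $\Phi(x_1+x,p)=\Phi(x,p)+\phi(\pml{p})$, one gets $\Phi(x_1-x,p)+\Phi((x_1+x)^{-},p)=2\phi(\pml{p})$; subtracting the line and using $\frac{2x_1}{\mu}=2\phi(\pml{p})$ yields
$$\Delta \Phi(x_1-x,p)=-\Delta \Phi(x_1+x,p).$$
Now $\Delta\Phi$ is $\pml{p}$-periodic by the previous paragraph, and an elementary fact is that a $T$-periodic function which sign-reflects about one point $a$ also sign-reflects about $a+\tfrac{m}{2}T$ for every integer $m$; taking $a=x_1=\pml{p}$ and $T=\pml{p}$ sweeps out exactly the points $x_k$ and $\tilde{x}_k$, so the reflections about all of them follow (and one reads off $\Delta\Phi(\tilde{x}_k,p)=0$ at the same time). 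The nonnegativity ranges in the statement are reached from the $[0,\tfrac{\pml{p}}{2}]$ range by the same periodicity, since $\Delta\Phi(x_k-x,p)=-\Delta\Phi(x_k+x,p)$ is preserved under $x\mapsto x+\pml{p}$.

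The step that needs genuine care — everything else is the bookkeeping of cancelling $\phi(\pml{p})$ against $\frac1\mu$ times a span — is the treatment of the unit jumps. At each $p$-rough number $\Phi$, and hence $\Delta\Phi$, has a discontinuity of size $1$, and the reflection identities are exact only if one consistently uses the left-limit convention $\Phi(x^{-},p)=\lim_{\epsilon\to0}\Phi(x-\epsilon,p)$ flagged in Section~1, so that under a substitution $x\mapsto x_k-x$ or $x\mapsto\tilde{x}_k-x$ a rise of $+1$ at $x$ maps to a rise of $+1$ at the reflected point rather than creating an off-by-one discrepancy at the endpoints. I would state this convention once at the start of the proof and carry it through each substitution; with it in force the cancellations above are exact, and the sawtooth picture (vertical rises of $1$, decay slope $-\frac1\mu$) makes the geometric content of each identity transparent.
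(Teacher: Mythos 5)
Your proposal is correct and follows essentially the same route as the paper: the paper treats these identities as directly inherited from the Section~1 symmetries of $\Phi(x,p)$ (translational, plus the reflection coming from the palindromic cycle of gaps) by subtracting the line $y=\frac{\phi(\pml{p})}{\pml{p}}x$, with the same left-limit convention at the unit jumps that you flag. Your write-up is simply more explicit than the paper's, in particular the observation that a $\pml{p}$-periodic function that is odd about one waypoint is odd about every half-period shift, which the paper leaves implicit.
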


For the rotational symmetries we have to be careful to treat the upper and lower values along the vertical rises in the graph
of $\Delta \Phi(x,p)$.  Under the rotational symmetries the limits from the left are mapped to the limits from the right.

The graph of $\Delta \Phi(x,p)$ and thus the graph of $\Phi(x,p)$ are completely determined by the first half of the first
period of $\Delta \Phi(x,p)$, $0 \le x \le \tilde{x}_1$.

In Figure~\ref{3roughFig} we can plot $\Delta \Phi(x,3)$ over several periods of the cycle ${\pgap(\pml{3})= 4\; 2}$.
This cycle is simple enough that we can observe both the translational and rotational symmetries, and the
sawtooth structure is easily visible.

For $p=5$, the cycle $\pgap(\pml{5}) = 6 4 2 4 2 4 6 2$, of length $8$ and span $30$.  In Figure~\ref{5roughFig}
we graph ${\Delta \Phi(x,5)}$ over a few periods of the cycle, and we still readily 
see both the sawtooth structure of the graph and its symmetries.

In later figures, we show two periods of $\Delta \Phi(x,7)$ and the first period for each of
$\Delta \Phi(x,11)$, $ \Delta \Phi(x,13)$, and $ \Delta \Phi(x,17)$

The cycle $\pgap(\pml{5})$ begins to exhibit the complexities that arise in these cycles of gaps $\pgap(\pml{p})$.
So the graph of $\Delta \Phi(x,5)$ is more interesting but still simple enough.  In Figure~\ref{5roughFig} the 
midpoint waypoints are marked by red squares, and we can see the rotational symmetry between the front half of 
the cycle and the back half.  With respect to the midpoint we write the rotational symmetry:
$$ \Delta \Phi \left(\frac{\pml{p}}{2}+ x, p \right) = - \; \Delta \Phi \left(\frac{\pml{p}}{2}- x, p \right) \gap {\rm for} 
\gap 0 \le x \le \frac{\pml{p}}{2}.$$

\begin{figure}[htb]
\centering
\includegraphics[width=5in]{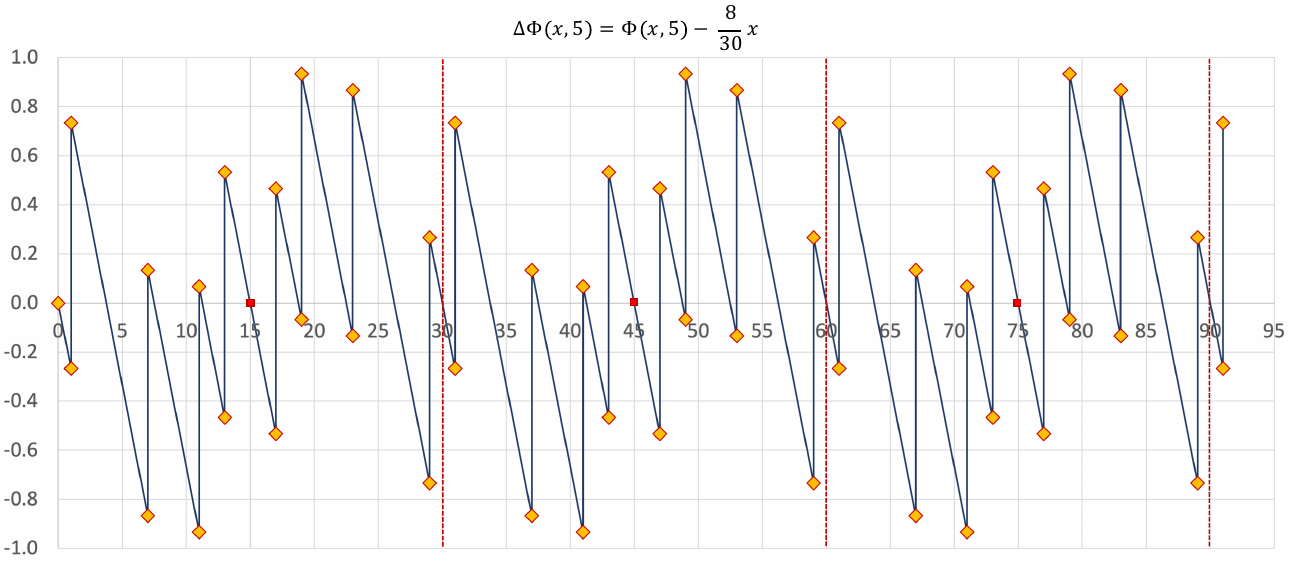}
\caption{\label{5roughFig} The equation ${\Delta \Phi(x,5) = \Phi(x,5) - \frac{1}{\mu} x }$
is shown over three periods. The cycle $\pgap(\pml{5})$ has length $8$ and span $30$.
The waypoints are highlighted and the periods of span $30$ are delimited, to highlight the symmetries.}
\end{figure}

In Figure~\ref{5roughFig} we note a couple of artifacts that will show up in all the graphs $\Delta \Phi(x,p)$.
The first vertical rise occurs at $x=1$.  The lower value here, decaying from $x=0$, is 
$$ \Delta \Phi(1^{-},p) = -\frac{\phi(\pml{p})}{\pml{p}} \; = \; - \frac{1}{\mu(p)},$$
and the upper value on this vertical edge is $1$ above the lower value.
$$ \Delta \Phi(1,p) = 1-\frac{1}{\mu}.$$
In the sawtooth, all of the vertical rises are of length $1$, incrementing the count for this next $p$-rough number.

At the end of every cycle $\pgap(\pml{p})$ there is a gap $2$ that carries us from $x_k-1$ to $x_k+1$.
By the symmetries of $\Delta \Phi(x,p)$, we must have
$$ \Delta \Phi(x_k -1, p) = \frac{1}{\mu} \biggap {\rm and} \biggap  
\Delta \Phi(x_k +1^{-}, p) = - \frac{1}{\mu}. $$

Let $q$ be the next prime larger than $p$.  The first gap in the cycle $\pgap(\pml{p})$ is $g_1=q-1$.
After the $p$-rough number $x=1$, there follows a long linear decay of length $\Delta x = q-1$,
dropping from
$$ \Delta \Phi(1,p)=1 - \frac{1}{\mu} \biggap {\rm to} \biggap 
\Delta \Phi(q^{-},p)=1 -  \frac{q}{\mu}. $$
The vertical rise of $1$ lifts the value to
$$\Delta \Phi(q,p) = 2 - \frac{q}{\mu}. $$

For every prime $p$ we know a few values for $\Delta \Phi(x,p)$, listed in Table~\ref{ValueTbl}.

\begin{figure}[htb]
\centering
\includegraphics[width=5in]{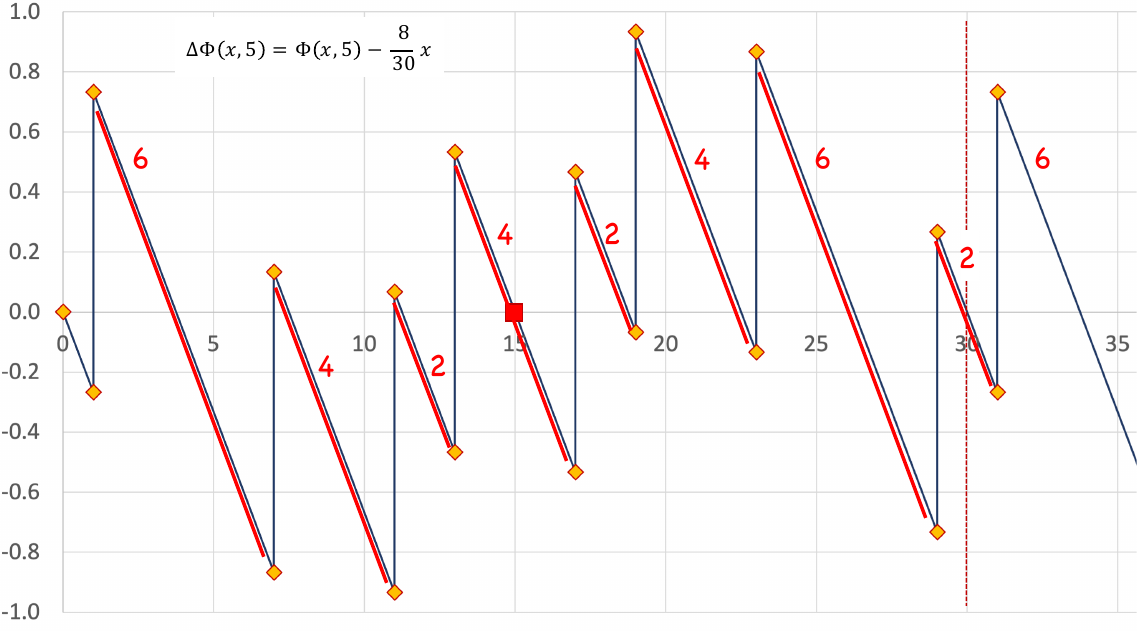}
\caption{\label{5roughZoomFig} The equation ${\Delta \Phi(x,5) = \Phi(x,5) - \frac{\phi(\pml{5})}{\pml{5}} x }$
is shown over one period.  We mark the downward segments with the corresponding gap $g$ in the
cycle $\pgap(\pml{5}) = 6 4 2 4 2 4 6 2$.}
\end{figure}

\begin{center}
\begin{table}[htb]
\renewcommand{\arraystretch}{1.75}
\begin{tabular}{|c|ccc|ccc|} \hline
$x$ & $0$ & $1$ & $q$ & $\frac{\pml{p}}{2}$ &$\frac{\pml{p}}{2}-2$ &$\frac{\pml{p}}{2}-2^j$ \\ \hline
$\Delta \Phi(x^-,p)$ & $0$ & $-\frac{1}{\mu}$ & $1-\frac{q}{\mu}$ & $0$ & $\frac{2}{\mu}-1$ & $\frac{2^j}{\mu}-j$ \\
$\Delta \Phi(x,p)$ &  $0$ & $1-\frac{1}{\mu}$ & $2-\frac{q}{\mu}$ & $0$ & $\frac{2}{\mu}$ & $\frac{2^j}{\mu}-j +1$  \\ \hline
$\hat{x}=\pml{p}-x$ & $\pml{p}$ & $\pml{p}-1$ & $\pml{p}-q$ & $\frac{\pml{p}}{2}$ & $\frac{\pml{p}}{2}+2$ &$\frac{\pml{p}}{2}+2^j$ \\
$\Delta \Phi(\hat{x}^-,p)$ & $0$ & $\frac{1}{\mu}-1$ & $\frac{q}{\mu}-2$ & $0$ & $-\frac{2}{\mu}$ & $j-1-\frac{2^j}{\mu}$ \\
$\Delta \Phi(\hat{x},p)$ & $0$ & $\frac{1}{\mu}$ & $\frac{q}{\mu}-1$ & $0$ & $1-\frac{2}{\mu}$ & $j-\frac{2^j}{\mu}$ \\ \hline
\end{tabular}
\caption{\label{ValueTbl} Known values for $\Delta \Phi(x,p)$ for any value of $p$.}
\end{table}
\end{center}

As we move into $\Delta \Phi(x,7)$ as shown in Figure~\ref{7roughFig}, the length and span of the cycle
$\pgap(\pml{p})$ makes it harder to see the fine structure, but it is still there.  The graph of 
${\Delta \Phi(x,7)}$ is still a sawtooth graph with vertical rises of $1$ at the $7$-rough numbers and parallel
linear decay between.  Two periods of $\Delta \Phi(x,7)$ are shown in Figure~\ref{7roughFig} with their
midpoints highlighted.  The graph is becoming noisy but we can still see the translational and rotational 
symmetries.

\begin{figure}[htb]
\centering
\includegraphics[width=5in]{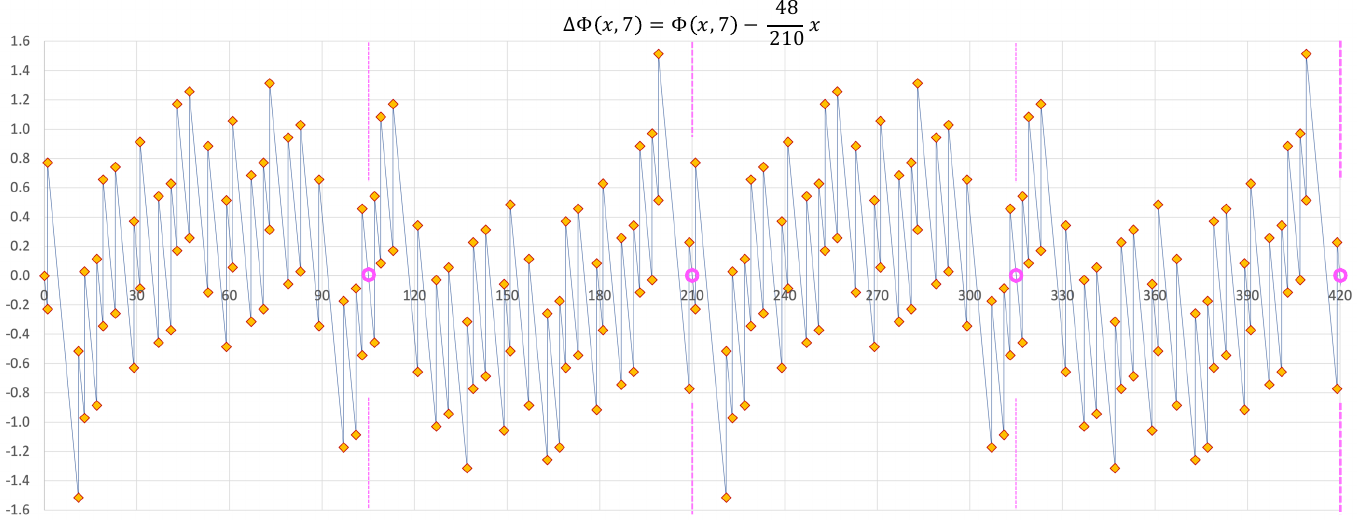}
\caption{\label{7roughFig} The equation ${\Delta \Phi(x,7) = \Phi(x,7) - \frac{\phi(\pml{7})}{\pml{7}} x }$
is shown over two periods.  The cycle $\pgap(\pml{7})$ has length $48$ and span $210$.  
The waypoints are highlighted, and the cycles of span $210$ are delimited, to highlight the symmetries.}
\end{figure}

As we increase $p$ to $11$, $13$, and $17$, it becomes harder to see the symmetries that we have identified and the
structure at the middle of the periods.  If we study the graphs in Figure~\ref{moreroughFig}, we see that
these features are still there.  In each of these graphs, we show only one period of $\Delta \Phi(x,p)$.

\begin{figure}[htb]
\centering
\includegraphics[width=5.25in]{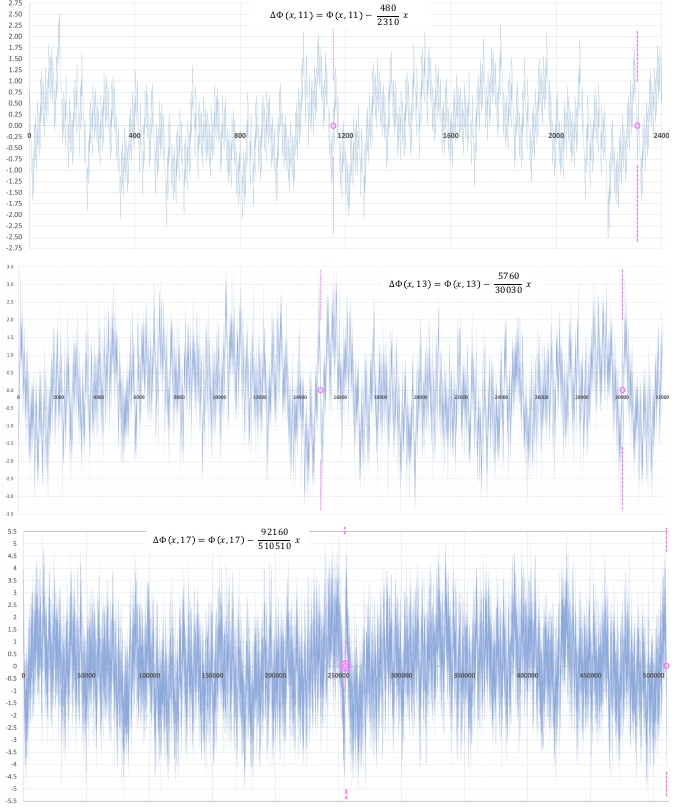}
\caption{\label{moreroughFig} The graphs for ${\Delta \Phi(x,11)}$, ${\Delta \Phi(x,13)}$, and ${\Delta \Phi(x,17)}$
are shown over one period.  The waypoints are marked and the end of the period is highlighted.  The rotational symmetry
is visible.}
\end{figure}


The upper section in Table~\ref{ValueTbl} lists the values at the end of the period.
The lower section in the table provides the values of $\Delta \Phi(x,p)$ over the constellation at the middle of the cycle $\pgap(\pml{p})$
$$ s \; = \; 2^{J} \; 2^{J-1} \; \ldots \; 4 \; 2 \; 4 \; 2 \; 4 \; \ldots \; 2^{J-1} \; 2^{J}$$
where $2^J$ is the largest power of $2$ such that $2^{J} < q$.  The last row in the table holds for all $1 \le j \le J+1$.

For $\Delta \Phi(x,13)$ and $\Delta \Phi(x,17)$ we show a closeup of the middle of the cycle in Figure~\ref{roughMidFig}.  Here we can
see both the local symmetry and the progression of segments corresponding to the middle constellation of 
$$ s = 16, \; 8 \; 4 \; 2 \; 4 \; 2 \; 4 \; 8, \; 16.$$

\begin{figure}[hbt]
\centering
\includegraphics[width=5in]{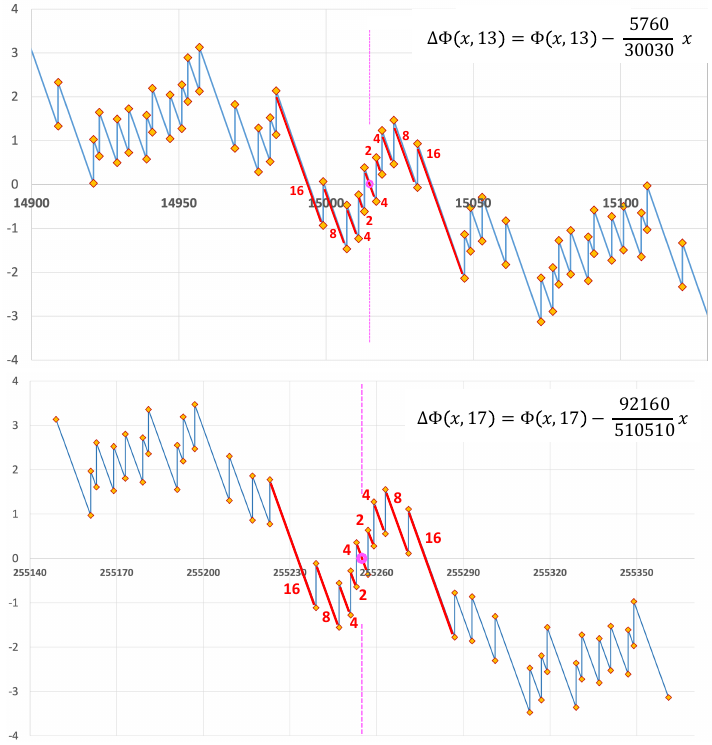}
\caption{\label{roughMidFig} The equations $\Delta \Phi(x,13)$ and $\Delta \Phi(x,17)$
are shown near the middle of the first period of the cycle $\pgap(\pml{13})$ and $\pgap(\pml{17})$.  
The waypoint at the middle of the period is highlighted.  In these close-ups we can see the sawtooth structure of the
graph, the rotational symmetry at this midpoint, and the midcycle constellation.}
\end{figure}

\section{Bounds on $\Delta \Phi(x,p)$}
From the work above, we have the following theorem.

\begin{theorem}\label{ThmPhi}
\begin{equation}
 \Phi (x,p) \; = \; \frac{\phi(\pml{p})}{\pml{p}} \cdot x \; + \; \Delta \Phi(x,p) \label{EqPhi}
 \end{equation}
in which $\Delta \Phi(x,p)$ is bounded and periodic, with period $\pml{p}$.  The bounds
$$\max_x \Delta \Phi(x,p) = \max_{p \le x \le \pml{p}} \Delta \Phi(x,p) = \limsup_{x \rightarrow \infty} \Delta \Phi(x,p)$$
and ${\min_x \Delta \Phi(x,p) = - \max_x \Delta \Phi(x,p)}.$ 
\end{theorem}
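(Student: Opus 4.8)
The plan is to read off every assertion of Theorem~\ref{ThmPhi} from the sawtooth description of $\Delta\Phi(x,p)$ together with the three lemmas of the previous section. Equation~(\ref{EqPhi}) is just the definition of $\Delta\Phi(x,p)$ rearranged, so the content lies in the structure statement. \emph{Periodicity} with period $\pml{p}$ is precisely the translational symmetry $\Delta\Phi(x+\pml{p},p)=\Delta\Phi(x,p)$, and it immediately gives $\limsup_{x\to\infty}\Delta\Phi(x,p)=\sup_{0\le x\le\pml{p}}\Delta\Phi(x,p)$ and the matching statement for $\liminf$. For \emph{boundedness} and for the claim that the supremum is attained I would use the piecewise-linear structure: on the compact interval $[0,\pml{p}]$ there are finitely many linear pieces, of slope $-1/\mu$ on each maximal interval between consecutive $p$-rough numbers, with an upward jump of exactly $1$ at each $p$-rough number $\gamma$, so $\Delta\Phi(\gamma)=\Delta\Phi(\gamma^{-})+1$. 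On every decaying piece the largest value occurs at its left endpoint, so $\max_{0\le x\le\pml{p}}\Delta\Phi(x,p)$ is the larger of $\Delta\Phi(0,p)=0$ and the maximal value $\Delta\Phi(\gamma)$ over the $p$-rough numbers $\gamma\in[1,\pml{p})$; since $\Delta\Phi(1,p)=1-\tfrac1\mu>0$ it equals the latter, hence is positive and attained.

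Next I would shrink the window from $[0,\pml{p}]$ to $[p,\pml{p}]$. The only $p$-rough number in $[0,p]$ is $\gamma=1$, because any $1<\gamma<p$ has a prime factor at most $\gamma<p$; and between $0$ and $p$ the graph only decays apart from the single jump at $x=1$, so $\max_{0\le x\le p}\Delta\Phi(x,p)=\Delta\Phi(1,p)=1-\tfrac1\mu$. It therefore suffices to exhibit one point of $[p,\pml{p}]$ where $\Delta\Phi$ is at least $1-\tfrac1\mu$. The number $\hat x=\pml{p}-q$ is $p$-rough, since it is coprime to $\pml{p}$ (here $q>p$), and the rotational symmetry about the midpoint $\tilde x_1=\tfrac12\pml{p}$ matches the jump at $q$ with the jump at $\hat x$, giving the value $\Delta\Phi(\hat x,p)=\tfrac{q}{\mu}-1$ already listed in Table~\ref{ValueTbl}. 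Now $\tfrac{q}{\mu}-1\ge 1-\tfrac1\mu$ is equivalent to $q+1\ge 2\mu$; I would verify this by hand for the first few primes and, for larger $p$, deduce it from the fact that $\mu=\prod_{\ell\le p}\tfrac{\ell}{\ell-1}$ grows only logarithmically (Mertens) while $q>p$ (Bertrand keeping $q$ under control). Since moreover $\pml{p}-q\ge p$ as soon as $p\ge5$, the point $\hat x$ lies in $[p,\pml{p}]$, and we obtain $\max_x\Delta\Phi(x,p)=\max_{p\le x\le\pml{p}}\Delta\Phi(x,p)=\limsup_{x\to\infty}\Delta\Phi(x,p)$.

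For the last identity $\min_x\Delta\Phi(x,p)=-\max_x\Delta\Phi(x,p)$ I would use the rotational symmetry about the waypoint $x_1=\pml{p}$, which together with periodicity reads $\Delta\Phi(\pml{p}-x,p)=-\Delta\Phi(x,p)$. Thus the set of values of $\Delta\Phi$ over one period is symmetric about $0$, so its infimum is minus its supremum; as usual one pairs left limits with right limits along the vertical rises, so the minimum is realized as the left limit at the $p$-rough number $\pml{p}-\gamma^{\star}$ where $\gamma^{\star}$ attains the maximum.

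The only step I expect to require more than bookkeeping is the window reduction, whose real content is the pair $2\mu\le q+1$ and $\pml{p}-q\ge p$: these say that the initial spike $\Delta\Phi(1,p)=1-\tfrac1\mu$ is neither larger than nor out of reach of $\Delta\Phi(\pml{p}-q,p)=\tfrac{q}{\mu}-1$. Both are comfortable for $p\ge5$; for $p\in\{2,3\}$ the spike at $x=1$ is the genuine maximum over a period and lies to the left of $p$, so there the interval in the theorem is more accurately $[1,\pml{p}]$. All the remaining claims follow mechanically from the translational and rotational symmetries together with the sawtooth shape of $\Delta\Phi$.
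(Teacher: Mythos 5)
Your proposal is correct and follows the same basic route as the paper: equation~(\ref{EqPhi}) is the definition of $\Delta\Phi$ rearranged, periodicity and the $\limsup$ identity come from the translational symmetry, and $\min = -\max$ comes from the rotational symmetry about the waypoints. In fact you prove more than the paper's own argument, which only establishes $\max_{0\le x\le\pml{p}}\Delta\Phi = \limsup_{x\to\infty}\Delta\Phi$ and never justifies the restriction to the window $[p,\pml{p}]$ appearing in the statement; your reduction via $\max_{0\le x\le p}\Delta\Phi = 1-\tfrac1\mu$ and the point $\hat x=\pml{p}-q$ with $\Delta\Phi(\hat x,p)=\tfrac{q}{\mu}-1$, together with the checks $q+1\ge 2\mu$ and $\pml{p}-q\ge p$, supplies exactly that missing step (with an effective Mertens-type bound needed to make the large-$p$ case of $q+1\ge 2\mu$ fully rigorous). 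Your observation that the middle equality fails as literally stated for $p\in\{2,3\}$ (e.g.\ for $p=3$ the global maximum $\tfrac23$ occurs only at $x=1<p$, while $\max_{3\le x\le 6}\Delta\Phi=\tfrac13$) is a genuine caveat the paper does not address.
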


\begin{proof}
The lemmas provide the periodicity.  From the translational symmetry, we see that the behavior 
of $\Delta \Phi(x,p)$ for extremely large $x$ is exactly described by the deviations within
the first period of the cycle $\pgap(\pml{p})$.

For fixed $p$, all of the waypoints $x_k$ and $\tilde{x}_k$ lie along the line of symmetry:
$$ y = \frac{\phi(\pml{p})}{\pml{p}} x.$$
All of the deviations of $\Phi(x_k+ x, p)$ from this line between the waypoints preserve both
the periodicity and the symmetry of the cycles of gaps $\pgap(\pml{p})$.

From the translational symmetry we know that for each fixed $p$ there are values $y_{\rm max}$ and
$y_{\rm min}$ such that
\begin{eqnarray*}
y_{\rm max} & = & \max_{0 \le x \le \pml{p}} \Delta \Phi(x,p) \gap = \gap \limsup_{x \rightarrow \infty} \Delta \Phi(x,p) \\
y_{\rm min} & = & \min_{0 \le x \le \pml{p}} \Delta \Phi(x,p) \gap = \gap \liminf_{x \rightarrow \infty} \Delta \Phi(x,p) 
\end{eqnarray*}
From the rotational symmetry we know that
$$ y_{\rm min} \gap = \gap - y_{\rm max}.$$
The constellation $s_1$ that goes from $0$ down to $y_{\rm min}$ is the reflection of the constellation $-s_1$ that goes
from $y_{\rm max}$ down to $0$.  Similarly the constellation $s_2$ that goes from $y_{\rm min}$ back up to $0$ is the reflection
of the constellation $-s_2$ that goes from $0$ up to $y_{\rm max}$.
\end{proof}

Theorem~\ref{ThmPhi} provides the best decomposition of $\Phi(x,p)$.  The first term in Equation~\ref{EqPhi} is the line
of symmetry for $\Phi(x,p)$, and the residual $\Delta \Phi(x,p)$ is periodic, bounded, and symmetric.  To fully describe the behavior
of $\Delta \Phi(x,p)$, we only need to understand its behavior over the interval ${p \le x \le \pml{p}}$, or by symmetry
${p \le x \le \frac{\pml{p}}{2}}$.

\begin{figure}[htb]
\centering
\includegraphics[width=5in]{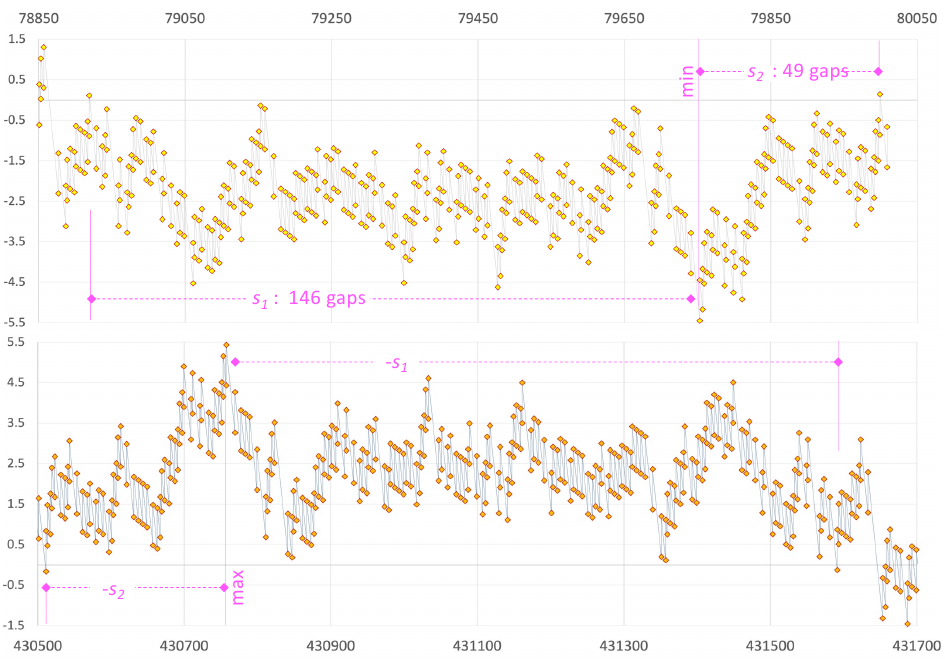}
\caption{\label{17MinMaxFig} Closeups of $\Delta \Phi(x,17)$ near the minimum value $-5.438880$ and the maximum
value $5.438880$. The constellation $s_1$ from $0$ down to the minimum has length $146$ and the constellation $s_2$ back
up to $0$ has length $49$.  By the rotational symmetry the constellation from $0$ up to the maximum is a reflection 
of $s_2$ and the constellation back down to $0$ is a reflection of $s_1$.}
\end{figure}

\vskip 0.125in

Let $s$ be a constellation in $\pgap(\pml{p})$ of length $j$ and span $|s| = \sum g$.  Then the change in $\Delta \Phi(x,p)$
from the start of $s$ to the end of $s$ is
\begin{eqnarray*}
\left. \Delta \Phi \right|_{s} (\cdot, p) & = & \Delta \Phi(x_0+ |s|, p) - \Delta \Phi(x_0,p) \\
& = & j - \frac{| s|}{\mu}
\end{eqnarray*}
for any $x_0$ that marks the start of an occurrence of $s$ in $\pgap(\pml{p})$.

For a significant rise in $\Delta \Phi$, we need a constellation whose average gap size is well below the mean, 
and for a significant fall we need a constellation whose average gap size is well above the mean.

For example, Figure~\ref{17MinMaxFig} provides closeups around the minimum and maximum in $\pgap(\pml{17})$.
The minimum $y_{\rm min}=-5.43880$, and the constellation $s_1$ that drops from $0$ down to $y_{\rm min}$ has
length $j=146$ and span ${\sum g = 834}$.  The average gap size for $s_1$ is $5.7123288$, compared to the mean value
for $\pgap(\pml{p})$: $\mu = 5.539388$ .
Similarly the constellation $s_2$ has length $j=49$ and span ${\sum g = 246}$ for an average gap size of $5.0204082$.

Table~\ref{MinMaxTbl} lists the minima and maxima for $\Delta \Phi(x,p)$ for the first few primes.

\begin{center}
\begin{table}[htb]
\renewcommand{\arraystretch}{1.25}
\begin{tabular}{|crr|rr|rr|} \hline
$p$ & $\pml{p}$ & $\phi(\pml{p})$ & $\mu(p)$ & ${\lil \max \& \min} \Delta \Phi$ & $N^+_0$ & $\frac{N^+_0}{ \phi(\pml{p})}$ \\ \hline
$5$ & $\lil 30$ &  $\lil 8$ & $\lil 3.750$ &  $\lil \pm 0.9333$ & $\lil 8$ & $\lil 100\% $ \\
$7$ & $\lil 210$ & $\lil 48$ & $\lil 4.375$ & $\lil \pm 1.5143$ & $\lil 32$ & $\lil 66.67\%$ \\
$11$ & $\lil 2310$ & $\lil 480$ & $\lil 4.813$ & $\lil \pm 2.5195$ & $\lil 262$ & $\lil  54.58\%$ \\
$13$ & $\lil 30030$ & $\lil 5760$ & $\lil 5.214$ & $\lil \pm 3.5475$ & $\lil 2216$  & $\lil  38.47\%$ \\
$17$ & $\lil 510510$ & $\lil 92160$ & $\lil 5.539$ & $\lil \pm 5.4388$ & $\lil 25948$  & $\lil  28.16\%$ \\
$19$ & $\lil 9699690$ & $\lil 1658880$ & $\lil 5.847$ & $\lil \pm 8.6592$ & $\lil 344337$ & $\lil  20.76\%$ \\
$23$ & $\lil 223092870$ & $\lil 36595360$ & $\lil 6.113$ & $\lil \pm 14.4180$ & $\lil 5438505$ & $\lil  14.90\%$ \\
$29$ & $\lil 6469693230$ & $\lil 1021870080$ & $\lil 6.331$ & $\lil \pm 20.9128$ & $\lil 109773262$ & $\lil  10.74\%$ \\
 \hline
\end{tabular}
\caption{\label{MinMaxTbl} For $\gap(\pml{p})$ and $\Delta \Phi(x,p)$ for the first few primes, 
we tabulate the mean gap size $\mu$, the min
and max values for $\Delta \Phi(x,p)$, and the number of rising zeroes for $\Delta \Phi(x,p)$.}
\end{table}
\end{center}

For single gaps the difference in $\Delta \Phi$ is
$$\left. \Delta \Phi \right|_g \; = \; 1 - \frac{g}{\mu}.$$ 
The largest rises will occur at gaps $g=2$ and the greatest drops will occur at the largest gaps in $\pgap(\pml{p})$,
connecting this exploration to the Problem of Jacobsthal.  If $\hat{g}$ is the largest gap in $\pgap(\pml{p})$, then
$$y_{\rm min} \; \le \; \frac{1}{2} \left( 1 - \frac{\hat{g}}{\mu} \right).$$
Of course $y_{\rm min}$ will be much lower than this.

Although not usually the maximal gap in $\pgap(\pml{p_k})$, we know there will always be a gap $g = 2p_{k-1}$, thus there
is a drop in $\Delta \Phi (x,p_k)$ over this single gap of
$$\left. \Delta \Phi \right|_{2p_{k-1}} \; = \; 1 - \frac{2p_{k-1}}{\mu}.$$ 

The equation
$$ \left. \Delta \Phi \right|_{s} (\cdot, p)  =  j - \frac{|s|}{\mu} $$
shows that the largest positive differences would occur for constellations that are extremely long (large $j$) for their
span (small $|s|$).  Specific extreme examples are provided by the dense admissible $k$-tuples identified by
Engelsma et al. \cite{Eng, EngSuth}.  Relative to other constellations of the same span, the catalog lists the longest 
known admissible examples.

For example, the Engelsma constellation of length $j=459$ and span $\sum g = 3242$ first occurs in $\pgap(\pml{113})$. 
This constellation has average gap-size $7.063$ compared to $\mu(113) = 8.713$, and $\Delta \Phi(x,113)$ rises
by $86.92$ over the course of this constellation.  

Since this constellation is admissible, it occurs among the $p$-rough numbers for all $p \ge 113$.  As $p$ grows, $\mu$ grows, 
and the rise in $\Delta \Phi$ over this constellation increases toward $j=459$.  It approaches its maximum contribution 
of $459$ to $\Delta \Phi$ very slowly.

\begin{figure}[htb]
\centering
\includegraphics[width=5.125in]{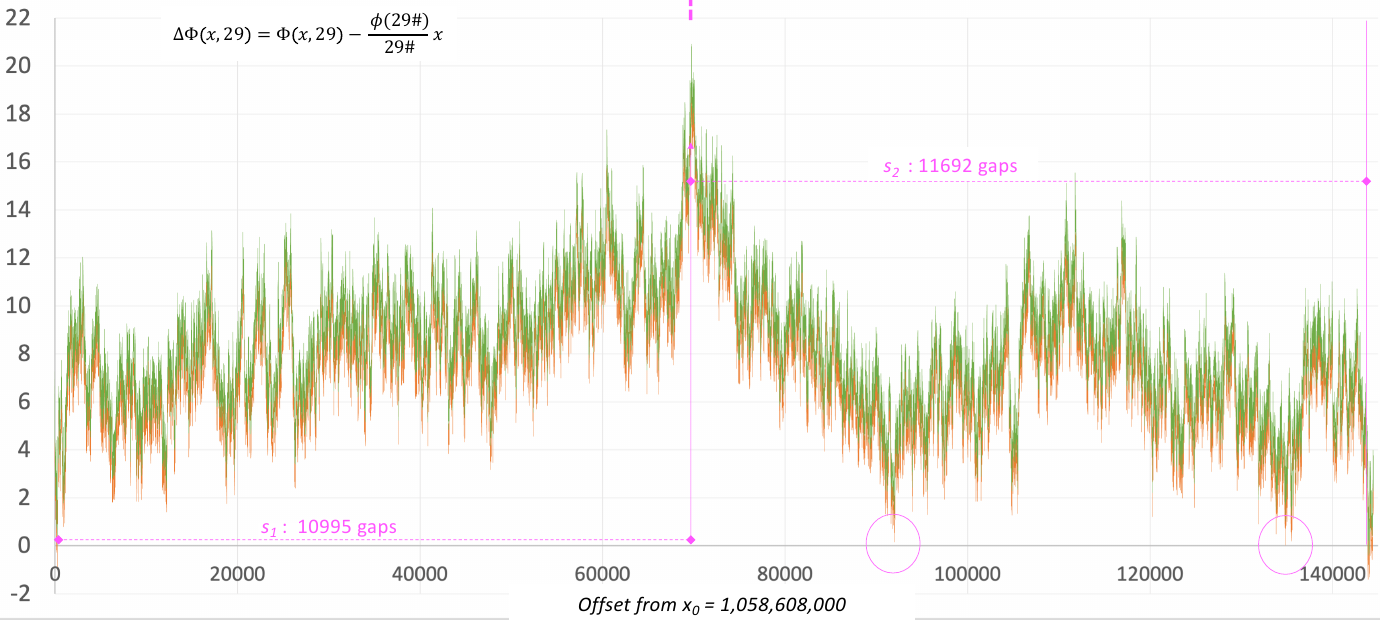}
\caption{\label{29MaxFig} Closeup of $\Delta \Phi(x,29)$ near the the maximum
value $20.9128$. The constellation $s_1$ from $0$ up to the maximum has length $10995$ and the constellation $s_2$ back
down to $0$ has length $11692$.  The $x$-axis values are offset from $x_0=1058608000$.  Note the near-zero values in $s_2$, 
circled in the graph above, of 0.0194 and 0.1968.}
\end{figure}

For $\Delta \Phi$ to reach its maximum value, the constellation from the last zero has to have an average gap size less than
the average $\mu$.  In $\pgap(\pml{29})$ for example, see Figure~\ref{29MaxFig}, the buildup from $x_L~=~1058608248$ to
the peak at $x_{\max}~=~1058677732$ has an average gap size of $6.31960$ over $10995$ gaps, compared to 
$\mu~=~6.33123$ for $p=29$.  

Single gaps and short dense constellations can produce rapid rises, but in order for these to produce maxima for $\Delta \Phi$
the density has to be sustained over long constellations.  The gap $g=2$ produces the largest single rise of $1-\frac{2}{\mu}$.
The constellation $s=242$ produces a rise of $3-\frac{8}{\mu}$.  In Figure~\ref{29MaxFig} we see rapid growth just to the left
of the maximum.  This growth occurs over a constellation of $1992$ gaps of average size $6.09174$.

Sustained rises in $\Delta \Phi(x,p)$ are supported by an abundance of gaps $g < \mu$.  Conversely, the constellations that lead to large drops in the value of $\Delta \Phi$ are supported by an abundance of gaps $g > \mu$.
These are constellations of short length $j$ and relatively large span $|s| = \sum g$.  

The mean $\mu=\frac{\pml{p}}{\phi(\pml{p})}$ grows slowly with $p$.  This has two effects.  First, the demarcation line $g < \mu$
shifts upward.  So the gaps move from contributing negative increments when $g > \mu$ to contributing positive increments
when $g < \mu$.  Figure~\ref{MuFig} shows how slowly the mean gap size $\mu$ grows with $p$.

\begin{figure}[htb]
\centering
\includegraphics[width=5in]{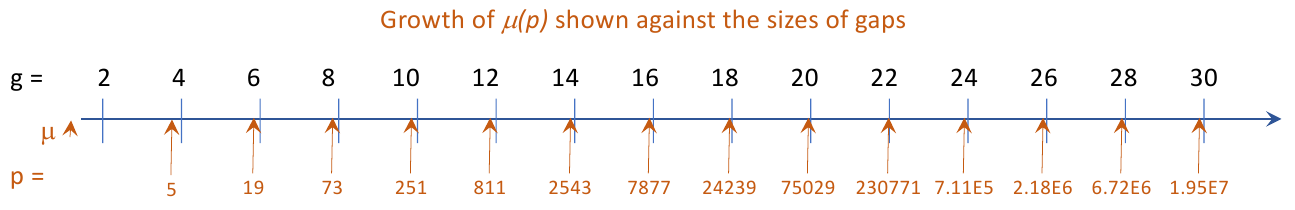}
\caption{\label{MuFig} A diagram of the growth in $\mu(p)$, shown against the sizes of gaps $g$.  Values of $p$ are shown for
each threshold at which the contribution $\left. \Delta \Phi\right|_g = 1 - g/\mu$ from a gap $g$ switches from negative to positive.}
\end{figure}

For peaks and valleys in $\Delta \Phi(x,p)$ we are looking for long constellations to occur between zeroes of the function.
There are two types of zeroes:  those on the downward-sloping segments, and those on the vertical segments of the sawtooth.
As we have seen above, the vertical segments are all of length $1$, and the length of each downward-sloping segment
is proportional to the corresponding gap $g$ in $\pgap(\pml{p})$.  We denote the number of zero-crossings on the downward-sloping 
segments by $N_0^-$ and the number of zero-crossings on the vertical segments by $N_0^+$.  Because the graph of $\Delta \Phi$
(including its vertical segments) is continuous and cyclic, $N_0^- = N_0^+$, and the downward zeroes and vertical zeroes alternate.

There are $\phi(\pml{p})$ downward-sloping segments and $\phi(\pml{p})$ vertical segments in each cycle of $\Delta \Phi(x,p)$.
In Table~\ref{MinMaxTbl} we see that for $p=5$ every vertical edge crosses $0$.  The percentage of vertical edges crossing $0$
drops quickly as $p$ grows.  This leaves more space between zeroes for long constellations that could grow toward peak values
in $\Delta \Phi(x,p)$.

\section{Asymptotics in $p$.}
To understand how the family of functions $\Phi(x,p)$ behaves as $p$ grows, we turn to Merten's Third Theorem
to see the asymptotic attraction to the surrogate line ${y = \frac{e^{-\gamma}}{\ln p} \cdot x}$.
From Merten's Third Theorem 
$$ \frac{\phi(\pml{p})}{\pml{p}} \; = \;  \prod_{q \le p} \left( \frac{q-1}{q} \right) \; \sim \; \frac{e^{-\gamma}+o(1)}{\ln p} , $$
as $p$ gets large.  Although the product does converge to $\frac{e^{-\gamma}}{\ln p}$, the relative error is significant
for a long time.

We return to the decomposition of $\Phi(x,p)$ from Theorem~\ref{ThmPhi},
$$
 \Phi(x,p) = \frac{\phi(\pml{p})}{\pml{p}} \cdot x + \Delta \Phi(x,p)
$$
in which $\Delta \Phi(x,p)$ is periodic and bounded.

Since the waypoints $x_k = k \pml{p}$ and $\tilde{x}_k=\left(k-\frac{1}{2}\right)\pml{p}$ lie on the line of symmetry, 
we have $\Delta \Phi(x_k,p) = \Delta \Phi(\tilde{x}_k,p) = 0$, and
\begin{eqnarray*}
\Phi( x_k, p) & = &  x_k \cdot  \frac{\phi(\pml{p})}{\pml{p}} \biggap  \sim  \biggap \frac{e^{-\gamma} \cdot x_k}{\ln p} \\
\Phi( \tilde{x}_k, p) & = &  \tilde{x}_k \cdot  \frac{\phi(\pml{p})}{\pml{p}}  \biggap  \sim  \biggap \frac{e^{-\gamma} \cdot \tilde{x}_k}{\ln p} 
\end{eqnarray*}
as $p$ gets really large.  For this sequence of waypoints $x_k$ and $\tilde{x}_k$, the asymptotic estimate is as accurate as 
Merten's Third Theorem. 

Other treatments \cite{ChGold, FanPom, FrGrOsc} of asymptotic estimates for $\Phi(x,p)$ factor out the linear dependence as either 
$\frac{\phi(\pml{p})}{\pml{p}}x$ or $\frac{e^{-\gamma}}{\ln p} x$.  In the first approach, which seems more natural,
$$ \Phi(x,p) \biggap = \biggap \left( \frac{\phi(\pml{p})}{\pml{p}} \; x \right) \left[ 1 \; + \; \frac{\Delta \Phi(x,p) \cdot \pml{p}}{\phi(\pml{p}) \cdot x}\right] $$
Since $\Delta \Phi(x,p)$ is bounded, that second term decays as $\frac{1}{x}$.

If we factor out $\frac{x}{\ln p}$ instead, as attributed to Tenenbaum, we have
$$ \Phi(x,p) \biggap = \biggap \frac{x}{\ln p} \;  \left[ \frac{\phi(\pml{p}) \cdot \ln p}{\pml{p}} \; + \; \frac{\Delta \Phi(x,p) \cdot \ln p}{ x}\right] .$$
As $p$ grows, the first term is $e^{-\gamma}$ times the relative error from Merten's Third Theorem, and the second term still decays as $\frac{1}{x}$.


\section{Conclusion}
We have identified a periodic and symmetric structure to $\Phi(x,p)$, tied to the cycle of gaps $\pgap(\pml{p})$, and we have introduced the
derived function
$$ \Delta \Phi(x,p) \gap = \gap \Phi(x,p) - \frac{\phi(\pml{p})}{\pml{p}} \cdot x.$$
This function $\Delta \Phi(x,p)$ measures the deviations of $\Phi(x,p)$ away from its line of symmetry ${y=\frac{\phi(\pml{p})}{\pml{p}} \cdot x}$.

$\Delta \Phi(x,p)$ is periodic and bounded, with period $\pml{p}$.  

There are two sets of waypoints $x_k = k \cdot \pml{p}$ and ${\tilde{x}_k = (k-\frac{1}{2})\pml{p}}$,
for which $\Delta \Phi(x_k,p)= \Delta \Phi(\tilde{x}_k,p) = 0$.
We have the translational symmetry ${\Delta \Phi(x+x_k,p) = \Delta \Phi(x,p)}$ and rotational symmetries around the waypoints.

We have also shown that using any surrogate line as a reference instead of the line of symmetry 
breaks the symmetries and introduces a linear drift.  Since 
$\Delta \Phi(x,p)$ is periodic and bounded, this linear drift eventually swamps any information about the function $\Phi(x,p)$ itself.  Specifically,
the estimates that use $y=\frac{e^{-\gamma}}{\ln p}\cdot x$ as the surrogate line are indirectly studying the error term in Merten's Third Theorem.

The function $\Phi(x,p)$ invites us to use its line of symmetry $y = \frac{\phi(\pml{p})}{\pml{p}}\cdot x$.


\bibliographystyle{amsplain}

\providecommand{\bysame}{\leavevmode\hbox to3em{\hrulefill}\thinspace}
\providecommand{\MR}{\relax\ifhmode\unskip\space\fi MR }
\providecommand{\MRhref}[2]{%
  \href{http://www.ams.org/mathscinet-getitem?mr=#1}{#2}
}
\providecommand{\href}[2]{#2}

\end{document}